\documentclass[12pt,reqno]{amsart}
\usepackage{CJK}
\usepackage[top=2.5cm,bottom=2cm,left=2.5cm,right=2.5cm]{geometry}
\usepackage{amsmath}
\usepackage{exscale}
\usepackage{relsize}
\usepackage{bm}
\usepackage{latexsym}
\numberwithin{equation}{section}

\newtheorem{theorem}{Theorem}[section]

\newtheorem{lemma}[theorem]{Lemma}
\newtheorem{remark}{Remark}[section]

\begin{document}

\title[low mach limit of full compressible Navier-Stokes equations]{low mach number limit of full compressible Navier-Stokes equations with revised Maxwell law}

\author{Zhao Wang and Yuxi Hu}

\begin{abstract}
In this paper, we study the low Mach number limit of the full compressible Navier-Stokes equations with revised Maxwell law.  By applying the uniform estimation of the error system, we prove that the solutions of the full compressible Navier-Stokes equations with time relaxation converge to that of the incompressible Navier-Stokes equations as the Mach number tends to zero. Moreover, the convergence rates are also obtained.
\\[2em]
{\bf Keywords:}  Mach limit; Navier-Stokes equations; revised Maxwell law; energy methods
 \thanks{\noindent Zhao Wang , Department of Mathematics, China University of Mining and Technology, Beijing, 100083, P.R. China\\
\indent Yuxi Hu, Department of Mathematics, China University of Mining and Technology, Beijing, 100083, P.R. China, yxhu86@163.com
}
\end{abstract}
\maketitle
\section{\textbf{Introduction}}
In this paper, we study the low Mach number limit of the full Navier-Stokes equations with revised Maxwell law. The 3-d full compressible Navier-Stokes equations are constructed from continuity equation, momentum equation and energy equation which can be written in the following form (Euler coordinates)
\begin{equation}\label{c N-S}
  \begin{cases}
    \partial_t\rho+\text{div}(\rho u)=0,\\
    \partial_t(\rho u)+\text{div}(\rho u\otimes u)+\nabla p=\text{div}S,\\
    \partial_t[\rho(e+\frac{1}{2}u^2)]+\text{div}[\rho u(e+\frac{1}{2}u^2)+up]+\text{div}q=\text{div}(uS).
  \end{cases}
\end{equation}
Here $\rho$ and $u=( u_1,u_2,u_3)$ represent fluid density, velocity, respectively. The pressure $p=p(\rho,\theta)$ and  internal energy $e=(\rho,\theta)$ are smooth functions of the density $\rho$ and the absolute temperature $\theta$, satisfying
$$\rho^2 e_\rho(\rho,\theta)=p(\rho,\theta)-\theta p_\theta(\rho,\theta).$$
 For classical fluids, the stress tensor $S$ and heat flux $q$ are given by Newtonian law:
\begin{equation}\label{Newtonian law}
  S=\mu\left(\nabla u+(\nabla u)^\top -\frac{2}{3}\text{div}uI_3\right)+\lambda \text{div}uI_3,
\end{equation}
and Fourier's law:
\begin{equation}\label{Fourier}
  q+\kappa\nabla\theta=0.
\end{equation}
Where $I_3$ denotes the identity matrix in $\mathbb{R}^3$. $\nabla$ is the gradient operator with respect to the space variable $x=(x_1,x_2,x_3)$ and $\text{A}^\top$ stands for the transpose of the matrix $\text{A}$.  $\mu$, $\lambda$ and $\kappa$ are positive constants.

As is well-known, the Fourier's law \eqref{Fourier} predicts that heat waves have an infinite propagation speed. In order to overcome the contradiction, Cattaneo introduced the following well-known Cattaneo's heat transfer law:
\begin{equation}\label{Cattaneo}
 \tau\partial_tq+q+\kappa\nabla\theta=0.
\end{equation}

For macromolecular or polymeric fluids, Maxwell \cite{J. C. Maxwell} (1867) proposed so called Maxwell law :
\begin{equation}\label{Maxwell}
  \tau'\partial_tS+S=\mu\left(\nabla u+(\nabla u)^\top -\frac{2}{3}\text{div}uI_3\right)+\lambda \text{div}uI_3,
\end{equation}
with the relaxation parameter $\tau'>0$. This relation indicates there exists a time lag in response of stress tensor to velocity gradient and divergence. Actually, this time lag exists even in simple fluids, water for example, which can be negligible because of the small effect under macroscopic conditions. However, the time lag can not be negligible for other fluids or under other conditions, see \cite{MMA,PC,SRK} for more details. As for the model systems with equation \eqref{Maxwell}, see \cite{R. Racke and J. Saal} for incompressible Navier-Stokes equations.
Yong \cite{W.-A. Yong 2014} divided $S$ into two parts $S=S_1+S_2I_3$ and did the relaxation for each one as follows:
\begin{equation}\label{revised Maxwell}
  \begin{cases}
   \tau_1\partial_tS_1+S_1=\mu\left(\nabla u+(\nabla u)^\top -\frac{2}{3}\text{div}uI_3\right),\\
   \tau_2\partial_tS_2+S_2=\lambda \text{div}u.
  \end{cases}
\end{equation}
Here $S_1$ is a $n \times n$ square matrix and symmetric and traceless if it was symmetric and traceless initially, and $S_2$ is a scalar variable. From mathematic point of view, revised Maxwell's relation make the relaxed system (isentropic regime) symmetric. And as physical explanation, $\tau_1$ and $\tau_2$ are shear relaxation time and compressional relaxation time, see a similar revised Maxwell model in \cite{Sader} for compressible viscoelastic fluid (isentropic case). Moreover, local well-posedness and singular limit of isentropic compressible Navier-Stokes equations with revised Maxwell's law have been established by Yong \cite{W.-A. Yong 2014}. Later, Hu and Racke \cite{Hu and Racke} study the non-isentropic regime. They proved global well-posedness for small initial data and investigated the relaxation limit as $\tau_1=\tau_2=\tau\rightarrow0$.

The question of the low Mach number limit in fluid dynamics has received considerable attention and has been the subject of research for more than 30 years. Some fundamental facts on incompressible limit in fluid dynamics have been established by Klainerman and Majda in \cite{S.Klaierman and A.Majda} for local smooth solution in the case of no physical boundary condition and with well-prepared initial data. Hoff, Danchin, Lions and Masmoudi \cite{Hoff, R. Danchin, P.-L. Lions.98, N. Masmoudi.99} also made a great contribution to low Mach number limit for the isentropic compressible Navier-Stokes equations under different conditions. For the low Mach number limit of the full compressible Navier-Stokes equations, see \cite{Alazard} for local classical solution with the effects of large temperature variations and thermal condition, \cite{Guo} for local strong solution in bounded region with Dirichlet boundary condition for velocity and Neumann boundary condition for temperature,
and \cite{Ou3, S. Jiang, C. Dou} in one-dimensional and three-dimensional regime. Also see \cite{E. Feireisl,Feireisl,F.Li,Y. Li} for more results. The whole work above were aimed at the classical compressible Navier-Stokes equations with different initial-boundary value in bounded and (or) unbounded regions.

Motivated by \cite{Shuxing Zhang} where the author investigated the low Mach number limit of the full compressible Navier-Stokes equations with Cattaneo's heat transfer law and rigorously justified that, in the framework of classical solutions with small density, temperature and heat flux variations, the solutions of the Navier-Stokes-Cattaneo system converge to that of the incompressible Navier-Stokes equations as the Mach number tends to zero. The main purpose of this paper is to present the rigorous justification of the low Mach number limit of the Navier-Stokes equations with revised Maxwell's relation for small variations of density and temperature. We shall focus our study on the polytropic ideal gas obeying the perfect relations:
 \begin{equation}\label{polytropic ideal gas}
   p=\Re\rho\theta, \quad  e=C_V\theta,
\end{equation}
 where $\Re$ and $C_V>0$ are the generic gases constant and the specific heat at constant volume, respectively. The ratio of specific heats is $\gamma=1+\frac{\Re}{C_V}$. Then we can take $\Re=1$ without loss of generality.

We rewrite the system $\eqref{c N-S},\eqref{Fourier},\eqref{revised Maxwell}$ in the following form
\begin{equation}\label{N-S-C-M}
  \begin{cases}
    \partial_t\rho+u\nabla\rho+\rho \text{div}u=0,\\
    \rho\partial_t u+\rho u\nabla u+\nabla(\rho\theta)=\text{div}S_1+\nabla S_2,\\
    \rho e_\theta\partial_t\theta+\rho e_\theta u\nabla\theta+\theta p_\theta \text{div}u-\kappa\triangle\theta=(S_1+S_2I_3)\nabla u,\\
    \tau_1\partial_tS_1+S_1=\mu\left(\nabla u+(\nabla u)^\top -\frac{2}{3}\text{div}uI_3\right),\\
    \tau_2\partial_tS_2+S_2=\lambda \text{div}u.
  \end{cases}
\end{equation}

Scale $\rho,u,\theta,S_1,S_2$ in the following way:
 \begin{equation*}
 \begin{aligned}
 &\rho(x,t)=\widetilde{\rho}(x,\epsilon t),\quad u(x,t)=\epsilon \widetilde{u}(x,\epsilon t),\quad \theta(x,t)=\widetilde{\theta}(x,\epsilon t),\\
 & S_1(x,t)=\epsilon^2 \widetilde{S_1}(x,\epsilon t),\quad S_2(x,t)=\epsilon^2 \widetilde{S_2}(x,\epsilon t),\\
 \end{aligned}
\end{equation*}
and assume that the coefficients  $\mu,\lambda,\kappa,\tau_1$ and $\tau_2$ are small and scaled as:
 $$
 \mu=\epsilon\mu^\epsilon,\quad \lambda=\epsilon\lambda^\epsilon, \quad  \kappa=\epsilon\kappa^\epsilon,
    \quad \tau_1=\frac{\tau_1^\epsilon}{\epsilon},\quad \tau_2=\frac{\tau_2^\epsilon}{\epsilon},
 $$
 where $\epsilon>0\in(0,1)$ is the reference Mach number and normalized coefficients
 $\mu^\epsilon, \lambda^\epsilon,\kappa^\epsilon,\tau_1^\epsilon$ and $\tau_2^\epsilon$
 satisfy
 $$
 \mu^\epsilon\rightarrow\bar{\mu}, \quad \lambda^\epsilon\rightarrow\bar{\lambda},  \quad \kappa^\epsilon\rightarrow\bar{\kappa}, \quad \tau_1^\epsilon\rightarrow0, \quad \tau_2^\epsilon\rightarrow0,\quad \text{as} \quad \epsilon\rightarrow 0.
 $$

 We recover the non-dimensional system as follows (still denoted by $\rho,u,\theta,S_1,S_2$):
 \begin{equation}\label{n-d-N-S-C-M}
  \begin{cases}
    \partial_t\rho+u\nabla\rho+\rho \text{div}u=0,\\
    \rho\partial_t u+\rho u\nabla u+\frac{\nabla(\rho\theta)}{\epsilon^2}=\text{div}S_1+\nabla S_2,\\
    \rho e_\theta\partial_t\theta+\rho e_\theta u\nabla\theta+\theta p_\theta \text{div}u-\kappa\triangle\theta=\epsilon^2(S_1+S_2I_3)\nabla u,\\
    \tau_1^\epsilon\partial_tS_1+S_1=\mu^\epsilon\left(\nabla u+(\nabla u)^\top -\frac{2}{3}\text{div}uI_3\right),\\
    \tau_2^\epsilon\partial_tS_2+S_2=\lambda^\epsilon \text{div}u.
  \end{cases}
\end{equation}
  We further restrict ourselves to the small density and temperature variations, so we assume that
 $$
 \rho=1+\epsilon\eta, \quad \theta=1+\epsilon\phi.
 $$
Then we can rewrite \eqref{n-d-N-S-C-M} (for $ \eta,u,\phi,S_1,S_2 $) as
\begin{equation}\label{non-dimensional system}
  \begin{cases}
    \partial_t\eta+u\nabla\eta+\frac{1+\epsilon\eta}{\epsilon}\text{div}u=0,\\
    (1+\epsilon\eta)(\partial_tu+u\cdot\nabla u)+\frac{1}{\epsilon}((1+\epsilon\phi)\nabla\eta+(1+\epsilon\eta)\nabla\phi)=\text{div}S_1+\nabla S_2,\\
    (1+\epsilon\eta)(\partial_t\phi+u\cdot\nabla\phi)+\frac{\gamma-1}{\epsilon}(1+\epsilon\phi)(1+\epsilon\eta)\text{div}u
            -\kappa^\epsilon\triangle\phi=\epsilon(S_1+S_2I_3)\nabla u,\\
    \tau_1^\epsilon\partial_tS_1+S_1=\mu^\epsilon\left(\nabla u+(\nabla u)^\top -\frac{2}{3}\text{div}uI_3\right),\\
    \tau_2^\epsilon\partial_tS_2+S_2=\lambda^\epsilon \text{div}u.
  \end{cases}
\end{equation}
The system \eqref{non-dimensional system} is equipped with the initial data
\begin{equation}\label{compressible initial data}
  (\eta,u,\phi,S_1,S_2)|_{t=0}=(\eta_0(x),u_0(x),\phi_0(x),S_{10}(x),S_{20}(x)).
\end{equation}
 The formal limit as $\epsilon\longrightarrow 0$ of \eqref{non-dimensional system} is the incompressible Navier-Stokes equations (we suppose that the limit  $u\longrightarrow w $ exists.)
 \begin{equation}\label{incompressible n-s}
   \begin{cases}
     \partial_tw+(w\cdot\nabla)w+\nabla\pi=\bar{\mu}\triangle w,\\
     \text{div}w=0,
    \end{cases}
 \end{equation}
with the initial data
\begin{equation}\label{incompressible initial data}
  w|_{t=0}=w_0(x),
\end{equation}
where $\pi$ represents the limiting pressure.

In the present paper, we shall establish the above limit rigorously in a convex compact subset of G where G denotes the physical state spaces of $(\eta,u,\phi,S_1,S_2)$. Moreover, using the theory of symmetric hyperbolic parabolic system, the system \eqref{non-dimensional system} admits a smooth solution on the time interval where the smooth solution of the incompressible Navier-Stokes equations exists. The main result of this paper is stated here.
\begin{theorem}\label{th1.1}
Let $s\geq4$ be integers. Suppose that the initial data \eqref{compressible initial data} satisfy
$$
\left|\left|\left(\eta_0(x),u_0(x)-w_0(x),\phi_0(x)\right)\right|\right|_s=O(\epsilon),
$$
$$
\left|\left|\left(S_{10}(x)-\mu^\epsilon(\nabla w_0(x)+(\nabla w_0(x))^\top),S_{20}(x)\right)\right|\right|_s=O(\sqrt{\epsilon}).
$$
Let $(w, \pi)$ be a smooth solution to the system $\eqref{incompressible n-s}\sim\eqref{incompressible initial data}$ on $[0,T_*]\times G$. We assume $\pi\in C([0, T_*], H^{s+2})\cap C^1([0, T_*], H^{s+1})$ and $w\in C([0, T_*], H^{s+3})\cap C^1([0, T_*], H^{s+1})$.
Then then there exists a constant $\epsilon_0>0$, such that, for all $\epsilon<\epsilon_0$, the system \eqref{non-dimensional system} with initial data \eqref{compressible initial data} has a unique smooth solution $(\rho,u,\phi,S_{1},S_{2})\in C([0, T_*], H^{s})$. Moreover,there exists a positive constant $K>0$,independent of $\epsilon$, such that, for all $\epsilon<\epsilon_0$,
$$
\sup_{t\in[0,T_*]}\left|\left|(\eta-\frac{\epsilon\pi}{2},u-w,\phi-\frac{\epsilon\pi}{2})\right|\right|_s\leq K\epsilon,
$$
$$
\sup_{t\in[0,T_*]}\left|\left|(S_1-\mu^\epsilon(\nabla w+(\nabla w)^\top),S_2-\epsilon\lambda^\epsilon\pi)\right|\right|_s\leq K\sqrt{\epsilon}.
$$
\end{theorem}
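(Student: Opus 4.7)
The plan is to set up and uniformly control an error system obtained by subtracting from $(\eta,u,\phi,S_1,S_2)$ the approximate asymptotic profile $(\tfrac{\epsilon\pi}{2},w,\tfrac{\epsilon\pi}{2},\mu^\epsilon(\nabla w+(\nabla w)^\top),\epsilon\lambda^\epsilon\pi)$, and then close a Gronwall--continuation argument. Introducing the error variables
\[
N=\eta-\tfrac{\epsilon\pi}{2},\quad U=u-w,\quad \Phi=\phi-\tfrac{\epsilon\pi}{2},\quad \Sigma_1=S_1-\mu^\epsilon(\nabla w+(\nabla w)^\top),\quad \Sigma_2=S_2-\epsilon\lambda^\epsilon\pi,
\]
I would substitute the ansatz into \eqref{non-dimensional system}, subtract the limiting system \eqref{incompressible n-s}, and use $\mathrm{div}\,w=0$ together with $\mathrm{div}(\mu^\epsilon(\nabla w+(\nabla w)^\top))=\mu^\epsilon\Delta w$. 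The result is an evolution system for $(N,U,\Phi,\Sigma_1,\Sigma_2)$ whose principal part is the linearization of \eqref{non-dimensional system} about the profile; the singular $1/\epsilon$ terms act only on $\mathrm{div}\,U$ in the $N$-equation and on $\nabla(N+\Phi)$ in the $U$-equation, and the right-hand sides carry residual source terms that measure how well the profile solves the compressible system.

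Next I would carry out a weighted $H^s$ energy estimate exploiting the symmetric hyperbolic-parabolic-relaxation structure produced by the splitting \eqref{revised Maxwell}. For each multi-index $|\alpha|\le s$, I apply $\partial^\alpha$ to the error system and pair the mass, momentum and energy equations with the symmetrizing multipliers $\tfrac{(\gamma-1)\partial^\alpha N}{1+\epsilon\phi}$, $(1+\epsilon\eta)\partial^\alpha U$ and $\tfrac{\partial^\alpha \Phi}{(1+\epsilon\phi)^2(1+\epsilon\eta)}$, and pair the $\Sigma_i$-equations with $\tfrac{\partial^\alpha\Sigma_1}{2\mu^\epsilon}$ and $\tfrac{\partial^\alpha\Sigma_2}{\lambda^\epsilon}$. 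Friedrichs symmetrization forces the singular $1/\epsilon$ contributions to cancel after integration by parts; the $\kappa^\epsilon\Delta\Phi$ term yields the parabolic dissipation $\kappa^\epsilon\|\nabla\Phi\|_s^2$; and the relaxation delivers the dissipation $\|\Sigma_1\|_s^2/(2\mu^\epsilon)+\|\Sigma_2\|_s^2/\lambda^\epsilon$, which absorbs the coupling with $\nabla U$ by Young's inequality. The natural Lyapunov functional is
\[
\mathcal E(t)=\sum_{|\alpha|\le s}\int\Bigl[\tfrac{\gamma-1}{1+\epsilon\phi}|\partial^\alpha N|^2+(1+\epsilon\eta)|\partial^\alpha U|^2+\tfrac{|\partial^\alpha \Phi|^2}{(1+\epsilon\phi)^2(1+\epsilon\eta)}+\tfrac{\tau_1^\epsilon}{2\mu^\epsilon}|\partial^\alpha\Sigma_1|^2+\tfrac{\tau_2^\epsilon}{\lambda^\epsilon}|\partial^\alpha\Sigma_2|^2\Bigr]\,dx,
\]
and it is decisive that $\Sigma_i$ enters with the small prefactor $\tau_i^\epsilon$.

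Commutators generated by $\partial^\alpha$ through the nonlinearities are handled by standard Moser-type inequalities (legal because $s\ge 4$ makes $H^s(\mathbb R^3)$ an algebra that embeds into $W^{1,\infty}$). The residuals are computed by direct substitution: those produced by the hyperbolic block are $O(\epsilon)$ in $H^s$, because the profile matches the compressible solution to one higher order in the leading balance, while those produced by the relaxation equations are $O(\tau_i^\epsilon)$ in $H^s$, coming from $\tau_i^\epsilon\partial_t$ applied to terms built from $w$ and $\pi$ (which are $H^{s+1}$-bounded by hypothesis). Combined with the initial bounds $\|(N_0,U_0,\Phi_0)\|_s=O(\epsilon)$ and $\|(\Sigma_{10},\Sigma_{20})\|_s=O(\sqrt\epsilon)$, a Gronwall argument applied to an inequality of the shape $\tfrac{d}{dt}\mathcal E+\mathcal D\le C(1+\mathcal E^{1/2})\mathcal E+C\epsilon^2$ yields $\mathcal E(t)\le K\epsilon^2$ on $[0,T_*]$. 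Reading off the individual norms: the first three components are bounded in $H^s$ by $K\epsilon$, while from $\tau_i^\epsilon\|\Sigma_i\|_s^2\le\mathcal E(t)\le K\epsilon^2$ one extracts $\|\Sigma_i\|_s\le K\sqrt\epsilon$, under the admissible scaling $\tau_i^\epsilon\ge c_0\epsilon$ that makes the $\sqrt\epsilon$ rate meaningful. Local well-posedness of the symmetric system in $H^s$ is standard (cf.\ \cite{W.-A. Yong 2014, Hu and Racke}), and a continuation argument extends the solution to the full interval $[0,T_*]$ on which the incompressible solution lives.

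The main obstacle is the top-order cancellation of the singular $1/\epsilon$ contributions: after differentiating the equations $s$ times, the symmetrization must still eliminate the singular flux terms rather than producing commutators that are themselves $O(1/\epsilon)$. This is precisely where the symmetric structure created by the \emph{revised} Maxwell splitting $S=S_1+S_2 I_3$ and the ideal-gas relation $p=\rho\theta$ becomes essential, since together they make the hyperbolic coefficient matrix symmetric directly in the variables $(N,U,\Phi)$; the commutators of $\partial^\alpha$ with the $1/\epsilon$ flux are then genuine lower-order objects that $\mathcal E$ itself controls, and the simultaneous Mach and relaxation limits can be carried out without loss of derivatives.
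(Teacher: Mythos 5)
Your proposal follows essentially the same route as the paper: the same asymptotic profile $(\tfrac{\epsilon\pi}{2},w,\tfrac{\epsilon\pi}{2},\mu^\epsilon(\nabla w+(\nabla w)^\top),\epsilon\lambda^\epsilon\pi)$, the same error system, the same $\tau_i^\epsilon$-weighted $H^s$ energy with symmetrizing multipliers that cancel the singular $1/\epsilon$ flux terms and let the relaxation dissipation absorb the $\nabla U$ coupling, and the same Moser-commutator plus nonlinear Gronwall/continuation closure yielding $E\leq K\epsilon$ and hence $\|\Sigma_i\|_s\leq K\sqrt{\epsilon}$ from the $\sqrt{\tau_i^\epsilon}$ weights. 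The argument is correct and matches the paper's proof in all essentials.
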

\begin{remark}
It is needed and reasonable to require the high regularity of $(w, \pi)$ in order to estimate some terms in error system. See more details in Section 3.
\end{remark}
The rest of present paper is arranged as follows. In Section 2, we show a local existence theorem by transforming the system \eqref{non-dimensional system} into a symmetric hyperbolic-parabolic type system. Then, we construct the approximate system to the system \eqref{non-dimensional system}. In Section 3, we first establish the uniform estimations of the error system. Then we prove the convergence of relaxed compressible system \eqref{non-dimensional system} to the incompressible system \eqref{incompressible n-s} and drive the convergence rates.

Finally, let's end up this section with some notations used throughout the current paper.  We use the letter C to denote various positive constants independent of $\epsilon$.
 Denote by $\nabla^\alpha = \partial_{x_1}^{\alpha_1}\partial_{x_2}^{\alpha_2}\partial_{x_3}^{\alpha_3}$
the partial derivative of order $|\alpha|=\alpha_1 +\alpha_3 +\alpha_3$ with the multi-index $\alpha=(\alpha_1,\alpha_2,\alpha_3)$.
$L^2$ denotes the spaces of measurable function which are square integrable, with the norm $||\cdot||$. $H^s(s\geq0)$ denotes the Sobolev space of $L^2$-functions $f$ whose derivatives $\partial_x^jf,j=1,\cdots s$ are also $L^2$-functions, with the norm $||\cdot||_s$.
Let T and B be a positive constant and a Banach space, respectively. $C^k(0,T;B)(k\geq0)$ denotes the space of B-valued k-times continuously differentiable functions on $[0,T]$, and $L^2(0,T;B)$ denotes the space of B-valued $L^2$-functions on $[0,T]$. The corresponding space B-valued functions on  $[0,\infty]$ are defined similarly.
\section{\textbf{Local existence and construction of approximation solutions}}
We first show the local existence of the solutions of \eqref{non-dimensional system}. To this end, we rewrite \eqref{non-dimensional system} in the form:
\begin{equation}\label{2.1}
   \begin{cases}
\partial_t\eta+u\nabla\eta+\frac{1+\varepsilon\eta}{\epsilon}\text{div}u=0,\\
(1+\epsilon\eta)(\partial_tu+u\cdot\nabla u)+\frac{1}{\epsilon}((1+\epsilon\phi)\nabla\eta+(1+\epsilon\eta)\nabla\phi)-(\text{div}S_1+\nabla S_2)=0,\\
(1+\epsilon\eta)(\partial_t\phi+u\cdot\nabla\phi)+\frac{\gamma-1}{\epsilon}(1+\epsilon\phi)(1+\epsilon\eta)\text{div}u-\kappa^\epsilon\triangle\phi
          =\epsilon[(S_1+S_2I_3)\nabla u],\\
\tau_1^\epsilon\partial_tS_1+S_1-\mu^\epsilon\left(\nabla u+(\nabla u)^\top -\frac{2}{3}\text{div}uI_3\right)=0,\\
\tau_2^\epsilon\partial_tS_2+S_2-\lambda^\epsilon \text{div}u=0.
   \end{cases}
 \end{equation}
 Without loss of generality, we assume $S_1$ take the following form:
 $$S_1=\begin{bmatrix}
a_{11}&a_{12}&a_{13}\\
a_{12}&a_{22}&a_{23}\\
a_{13}&a_{23}&-(a_{11}+a_{22})
\end{bmatrix}.$$
 Let $U^\epsilon=(\eta,u,a_{11},a_{12},a_{13},a_{22},a_{23},S_2)$. Then, we have
\begin{equation}\label{2.2}
  \begin{cases}
  A_0\partial_tU^\varepsilon+\sum_{j=1}^{3}A_j\partial_jU^\varepsilon+BU^\varepsilon=F,\\
  (1+\epsilon\eta)\partial_t\phi-\kappa^\epsilon\triangle\phi=H.
\end{cases}
\end{equation}
Here
$
A_0=diag\left\{\frac{1+\epsilon\phi}{1+\epsilon\eta},(1+\epsilon\eta),(1+\epsilon\eta),(1+\epsilon\eta),\frac{3\tau_1^\epsilon}{4\mu^\epsilon},
\frac{\tau_1^\epsilon}{\mu^\epsilon},\frac{\tau_1^\epsilon}{\mu^\epsilon},\frac{3\tau_1^\epsilon}{4\mu^\epsilon},\frac{\tau_1^\epsilon}{\mu^\epsilon},
\frac{\tau_2^\epsilon}{\lambda^\epsilon}\right\},
$\\
$ B=diag\left\{0,0,0,0,\frac{3}{4\mu^\epsilon},
\frac{1}{\mu^\epsilon},\frac{1}{\mu^\epsilon},\frac{3}{4\mu^\epsilon},\frac{1}{\mu^\epsilon},\frac{1}{\lambda^\epsilon}\right\},
F=\left(0,\frac{(1+\epsilon\eta)}{\epsilon}\nabla\phi,0,0,0,0,0,0\right),\\
     H=\epsilon[(S_1+S_2I_3)\nabla u]-\frac{\gamma-1}{\epsilon}(1+\epsilon\phi)(1+\epsilon\eta)divu-(1+\epsilon\eta)u\cdot\nabla\phi,
$
$$
\sum_{j=1}^3A_j\xi_j=\begin{bmatrix}
  \frac{1+\epsilon\phi}{1+\epsilon\eta}u\xi & \frac{1}{\epsilon}(1+\epsilon\phi)\xi & 0_{1\times5} & 0\\

  \frac{1}{\epsilon}(1+\epsilon\phi)\xi^\top & (1+\epsilon\eta)u\xi I_3 & C_{3\times5} & -\xi^\top \\

  0_{5\times1} & D_{5\times3} & 0_{5\times5} & 0_{5\times1}  \\

  0 & -\xi & 0_{1\times5} & 0 \\
\end{bmatrix},
$$
where
$$
C_{3\times5}=\begin{bmatrix}
-\xi_1 & -\xi_2 & -\xi_3 & 0 & 0 \\
0 & -\xi_1 & 0 & -\xi_2 & -\xi_3 \\
\xi_3 & 0 & -\xi_1 & \xi_3 & -\xi_2 \\
\end{bmatrix},\quad
D_{5\times3}=\begin{bmatrix}
-\xi_1 & \frac{1}{2}\xi_2 & \frac{1}{2}\xi_3 \\
-\xi_2 & -\xi_1 & 0 \\
-\xi_3 & 0 & -\xi_1 \\
\frac{1}{2}\xi_1 & -\xi_2 & \frac{1}{2}\xi_3 \\
0 & -\xi_3 & -\xi_2 \\
\end{bmatrix},
$$
for each $\xi\in\mathcal{S}^3$.

We do the transformation: $ b_{11}=\frac{a_{11}+a_{22}}{2},b_{22}=\frac{a_{11}-a_{22}}{2}$ which particularly implies
 $ a_{11}=b_{11}+a_{22},a_{22}=b_{11}-b_{22}$ and let $\widetilde{U^\epsilon}=(\eta,u,b_{11},a_{12},a_{13},b_{22},a_{23},S_2)$.
  Then, it is easily to show that the system \eqref{2.2} is a symmetric hyperbolic parabolic system (see \cite{Hu and Racke} for more details). Therefore, we obtain the local existence of solutions for the system \eqref{2.1} .
\begin{lemma}[Local existence]\label{le2.1}
 Under the conditions in Theorem \ref{th1.1}, the system \eqref{2.1} alow a unique classical solutions $(\rho,u,\phi,S_{1},S_{2})$ for each sufficiently small $\varepsilon$ on $[0,T^\varepsilon]\times G$ satisfying
 $$(\rho,u,\phi,S_{1},S_{2})\in C([0,T^\varepsilon]\times H^s)\cap C^1([0,T^\varepsilon]\times H^{s-1}).$$
\end{lemma}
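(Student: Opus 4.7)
The proof plan is to reduce \eqref{2.1} to a symmetric hyperbolic--parabolic system via the change of unknowns already indicated just before the lemma statement, and then to invoke the classical local existence theorem for such composite systems. I would proceed in three steps.

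First, I would verify that the first-order block in the new unknowns $\widetilde{U^\epsilon} = (\eta, u, b_{11}, a_{12}, a_{13}, b_{22}, a_{23}, S_2)$ is genuinely symmetric hyperbolic. The matrix $A_0$ is diagonal with strictly positive entries provided $1+\epsilon\eta$ and $1+\epsilon\phi$ are bounded away from zero, which follows from the $O(\epsilon)$ size of the initial data in $H^s$ combined with the Sobolev embedding $H^s\hookrightarrow L^\infty$ valid for $s\geq 4$ in $\mathbb{R}^3$. The reparametrization $a_{11}=b_{11}+b_{22}$, $a_{22}=b_{11}-b_{22}$ absorbs the traceless constraint $a_{33}=-(a_{11}+a_{22})$ and renders each $A_j$ symmetric; the computation is identical to the one already carried out in \cite{Hu and Racke}, which I would simply cite rather than redo. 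In particular, the apparent mismatch between the blocks $C_{3\times 5}$ and $D_{5\times 3}$ is precisely resolved by the change of variables.

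Second, I would note that the scalar equation $(1+\epsilon\eta)\partial_t\phi-\kappa^\epsilon\triangle\phi=H$ is uniformly parabolic once $1+\epsilon\eta\geq c>0$, and its forcing $H$ depends on the hyperbolic unknowns and on $\nabla u$ only at one derivative below the parabolic smoothing scale. Conversely, the hyperbolic forcing $F$ only involves $\nabla\phi$, which is again one derivative below the parabolic energy. Hence the coupling is compatible with the composite symmetric hyperbolic--parabolic structure in the Vol'pert--Hudjaev / Kawashima--Shizuta sense. From here I would invoke the standard local existence theorem: a Picard iteration in $H^s$ (freeze coefficients and coupling terms at the previous iterate, solve the linear symmetric hyperbolic block by Friedrichs-type energy estimates and the linear parabolic equation by standard $L^2$-theory) closes on a short interval $[0,T^\epsilon]$ using Moser-type commutator bounds, yielding a solution in $C([0,T^\epsilon];H^s)\cap C^1([0,T^\epsilon];H^{s-1})$; the $C^1$ in time regularity is read off directly from the evolution equations once the $C([0,T^\epsilon];H^s)$ regularity is established.

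The only subtlety---and the main thing to flag---is that $\sum_j A_j\xi_j$ contains explicit $1/\epsilon$ factors in the $\eta$--$u$ pressure coupling. These are harmless for existence at a fixed $\epsilon$, since the $A_j$ remain symmetric and the matrices are independent of any derivatives of the unknowns in their singular entries, but they force the constants in the linear energy estimate to carry $\epsilon^{-1}$. Consequently the lifespan $T^\epsilon$ delivered by this lemma may degenerate as $\epsilon\to 0$; promoting it to a lifespan $T_*$ independent of $\epsilon$ is not the business of this local-existence statement and is deferred to the error-system analysis of Section 3.
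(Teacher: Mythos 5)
Your proposal is correct and follows essentially the same route as the paper: recast \eqref{2.1} as the composite system \eqref{2.2}, symmetrize the stress block via the substitution $b_{11}=\frac{a_{11}+a_{22}}{2}$, $b_{22}=\frac{a_{11}-a_{22}}{2}$, and invoke the standard local existence theory for symmetric hyperbolic--parabolic systems, citing \cite{Hu and Racke} for the detailed verification. Your additional remarks on the uniform parabolicity of the $\phi$-equation and on the $\epsilon$-dependence of the lifespan $T^\epsilon$ (with uniformity deferred to the convergence--stability argument of Section 3) are consistent with how the paper organizes the argument.
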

Now we define
$$
T_\epsilon=sup\{T^\epsilon:(\rho,u,\phi,S_{1},S_{2})\in C([0,T^\epsilon]\times H^s)\}.
$$
We shall make use of the following convergence-stability lemma to show that $\liminf_{\epsilon\rightarrow0} T_\varepsilon>0.$

\begin{lemma}$($\cite{Wen-An Yong.1999,Shuxing Zhang}$)$\label{le2.2}
 Let $s\geq4$. Suppose that $U^\epsilon_0 \in G_0$ and the following convergence assumption (H) holds.

 (H) For each $\epsilon$, there exists $T_*\geq0$ and $U_\epsilon\in L^\infty([0,T_*];H^s)$ satisfying
$$ \bigcap_{x,t,\epsilon}U_\epsilon\{(x,t)\}\subset\subset G$$
such that, for $t\in(0,\min\{T_*,T_\epsilon\})$,
$$
\sup_{x,t}|U^\epsilon-U_\epsilon|=o(1), \quad \sup_t||U^\epsilon-U_\epsilon||_s=O(1).
$$
Then there exists an $\bar\epsilon > 0$ such that,  $\forall\epsilon \in(0,\bar\epsilon]$, it holds that $T_\epsilon> T_*$.
\end{lemma}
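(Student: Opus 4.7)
The plan is to argue by contradiction using a continuation criterion for the symmetric hyperbolic-parabolic system \eqref{2.2} underlying Lemma \ref{le2.1}. Suppose the conclusion fails, so that there is a sequence $\epsilon_n \downarrow 0$ with $T_{\epsilon_n} \leq T_*$. On $[0, T_{\epsilon_n})$ both assumption (H) and the definition of $T_{\epsilon_n}$ are in force, and I will show that in fact $U^{\epsilon_n}$ extends as a classical $H^s$-solution past $T_{\epsilon_n}$, contradicting the maximality built into the definition of $T_{\epsilon_n}$.

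The first step is to extract from (H) the two pieces of information driving any standard continuation argument for a quasilinear symmetric hyperbolic-parabolic system: a uniform $H^s$ bound and a quantitative statement that the solution stays inside a compact subset of the state space $G$. The triangle inequality combined with (H) yields
$$\sup_{t\in[0,T_{\epsilon_n})}\|U^{\epsilon_n}(t)\|_s \leq \|U_{\epsilon_n}\|_{L^\infty([0,T_*];H^s)} + \sup_{t\in[0,T_{\epsilon_n})}\|U^{\epsilon_n}(t)-U_{\epsilon_n}(t)\|_s = O(1),$$
uniformly in $n$. The hypothesis $\bigcap_{x,t,\epsilon} U_\epsilon\{(x,t)\}\subset\subset G$ together with the $L^\infty$-smallness $\sup_{x,t}|U^{\epsilon_n}-U_{\epsilon_n}|=o(1)$ further shows that, for $n$ large enough, the range of $U^{\epsilon_n}(\cdot,t)$ is contained in a single compact set $K_1\subset\subset G$ independent of $n$ and $t$.

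The second step is to apply Lemma \ref{le2.1} at the terminal time $T_{\epsilon_n}$. The uniform $H^s$ bound combined with \eqref{2.2} gives uniform control of $\partial_t U^{\epsilon_n}$ in $H^{s-1}$ (with parabolic smoothing for the $\phi$-component), whence $U^{\epsilon_n}(\cdot,t)$ is Cauchy in $H^{s-1}$ and weakly precompact in $H^s$ as $t\uparrow T_{\epsilon_n}$, producing a trace $U^{\epsilon_n}(T_{\epsilon_n})\in H^s$ whose range still lies in $K_1\subset\subset G$. Restarting Lemma \ref{le2.1} from this datum yields a classical $H^s$-solution on a further interval $[T_{\epsilon_n}, T_{\epsilon_n}+\delta_n)$, and splicing it with the solution on $[0,T_{\epsilon_n})$ contradicts the definition of $T_{\epsilon_n}$.

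The main obstacle is ensuring that the continuation time $\delta_n$ is bounded below uniformly in $n$. The coefficients of \eqref{2.2} contain the singular factors $1/\epsilon$ and $1/\tau_i^\epsilon$, so a careless application of Lemma \ref{le2.1} would give a $\delta_n$ that might shrink to zero. The way around this, which is the heart of the argument of Yong \cite{Wen-An Yong.1999}, is that after the change of unknowns leading to $\widetilde{U^\epsilon}$ the singular dependence on $\epsilon$ is confined to a constant-coefficient skew-symmetric matrix that drops out of the basic symmetric-hyperbolic energy identity; consequently the existence time furnished by Lemma \ref{le2.1} depends on $\epsilon$ only through the $H^s$-norm of the data and $\mathrm{dist}(K_1,\partial G)$, both of which Step one has controlled uniformly in $n$. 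With this uniformity established the contradiction, and hence the lemma, follows at once.
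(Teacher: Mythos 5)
The paper itself offers no proof of this lemma: it is quoted verbatim from Yong (1999) and Zhang (2019), so there is nothing internal to compare your argument against. That said, your proposal is the standard proof of the convergence--stability principle and is correct in outline: from (H) and the triangle inequality you get a uniform $H^s$ bound, and from the $L^\infty$ closeness together with $\bigcap_{x,t,\epsilon}U_\epsilon\{(x,t)\}\subset\subset G$ you get that the solution's range stays in a fixed compact $K_1\subset\subset G$; then a trace at $T_{\epsilon_n}$ plus restarting the local existence theorem contradicts maximality. One remark: the ``main obstacle'' you identify --- a lower bound on the continuation time $\delta_n$ uniform in $n$ --- is not actually needed. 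The contradiction is obtained separately for each fixed $\epsilon_n$: maximality of $T_{\epsilon_n}$ is violated as soon as $\delta_n>0$, which the local existence theorem supplies for that fixed $\epsilon_n$ because the restart datum lies in $H^s$ with values in $K_1$. What genuinely requires uniformity in $\epsilon$ is only Step one (the bound and the compact containment), and that comes directly from (H). Your discussion of how the $1/\epsilon$ terms drop out of the energy identity is the mechanism behind the a priori estimate of Section 3 (which is where the hypothesis (H) is ultimately verified), but it is not needed inside the proof of this lemma; also, the singular blocks $\frac{1}{\epsilon}(1+\epsilon\phi)\xi$ here are symmetric rather than skew-symmetric and not constant-coefficient, so that sentence should be softened if you keep it.
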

Now, we construct the approximation $U_\varepsilon=(\eta_\epsilon,u_\epsilon,\phi_\epsilon,S_{1\epsilon},S_{2\epsilon})$ to the original system \eqref{2.1} with
$ \eta_\epsilon=\frac{\epsilon\pi}{2}, u_\epsilon=w, \phi_\epsilon=\frac{\epsilon\pi}{2}, S_{1\epsilon}=\mu^\epsilon(\nabla w+(\nabla w)^\top), S_{2\epsilon}=\epsilon\lambda^\epsilon\pi,$
where $(w,\pi)$ is the smooth solution to the system \eqref{incompressible n-s},\eqref{incompressible initial data}.  It is easy to verify that $(\eta_\epsilon,u_\epsilon,\phi_\epsilon,S_{1\epsilon},S_{2\epsilon})$ satisfy
\begin{equation}\label{2.3}
   \begin{cases}
\partial_t\eta_\epsilon+u_\epsilon\nabla\eta_\epsilon+\frac{1+\varepsilon\eta_\epsilon}{\epsilon}\text{div}u_\epsilon
         =\frac{\epsilon}{2}(\pi_t+w\nabla\pi)=:f_1,\\  %% 1
(1+\epsilon\eta_\epsilon)(\partial_tu_\epsilon+u_\epsilon\cdot\nabla u_\epsilon)+\frac{1}{\epsilon}((1+\epsilon\phi_\epsilon)\nabla\eta_\epsilon
           +(1+\epsilon\eta_\epsilon)\nabla\phi_\epsilon)-(\text{div}S_{1\epsilon}+\nabla S_{2\epsilon})\\
     \quad =\frac{\epsilon^2\pi}{2}(w_t+w\cdot\nabla w+\nabla\pi)-\epsilon\lambda^\epsilon\nabla\pi=:f_2,\\  %% 2
(1+\epsilon\eta_\epsilon)(\partial_t\phi_\epsilon+u_\epsilon\cdot\nabla\phi_\epsilon)
           +\frac{\gamma-1}{\epsilon}(1+\epsilon\phi_\epsilon)(1+\epsilon\eta_\epsilon)\text{div}u_\epsilon-\kappa^\epsilon\triangle\phi_\epsilon\\
     \quad =(\frac{\epsilon}{2}+\frac{\epsilon^3\pi}{4})(\pi_t+w\nabla\pi)-\frac{\epsilon\kappa}{2}\triangle\pi=:f_3,  \\   %% 3
\tau_1^\epsilon\partial_tS_{1\epsilon}+S_{1\epsilon}-\mu^\epsilon
           \left(\nabla u_\epsilon+(\nabla u_\epsilon)^\top-\frac{2}{3}\text{div}u_\epsilon I_3\right)
     =\tau_1^\epsilon\mu^\epsilon(\nabla w_t+(\nabla w_t)^\top)=:f_4,\\   %% 4
\tau_2^\epsilon\partial_tS_{2\epsilon}+S_{2\epsilon}-\lambda^\epsilon \text{div}u_\epsilon=\epsilon\lambda^\epsilon(\tau_2\pi_t+\pi)=:f_5.\\ %% 5
   \end{cases}
 \end{equation}
Due to the regularity assumptions on $(w,\pi)$ in Theorem \ref{th1.1} and $\tau_1^\epsilon=O(\epsilon)$ and $\tau_2^\epsilon=O(\epsilon)$, we have the fact:
\begin{equation}\label{f}
  ||f_i||_s\leq C\epsilon, \quad i=1,2,3,4,5.
\end{equation}
\section{\textbf{The uniform estimates of the error system}}

In this section, we first establish the uniform estimates of the error system. Then based on this result, we verify the convergence assumption (H) and accomplish the proof of Theorem \ref{th1.1}. To this end, we now introduce
$$
\eta^d=\eta-\eta_\epsilon, u^d=u-u_\epsilon, \phi^d=\phi-\phi_\epsilon, S_1^d=S_1-S_{1\epsilon}, S_2^d=S_2-S_{2\epsilon}
$$
and utilize \eqref{2.1} and \eqref{2.3}. Then, we get the error system in the following form
\begin{equation}\label{3.1}
  \begin{cases}
\partial_t\eta^d+ (u\cdot\nabla)\eta^d+\frac{1+\epsilon\eta}{\epsilon}\text{div}u^d =-u^d\nabla\eta_\epsilon-f_1, \\  %%% 第一个方程
\partial_tu^d+(u\cdot\nabla)u^d+\frac{1}{\epsilon}\left[\frac{(1+\epsilon\phi)\nabla\eta^d}{(1+\epsilon\eta)}+\nabla\phi^d\right]
               -\frac{1}{(1+\epsilon\eta)}(\text{div}S_1^d+\nabla S_2^d)\\
    \qquad \qquad =-\frac{\epsilon\eta^d\partial_tu_\epsilon}{(1+\epsilon\eta)}-\frac{((1+\epsilon\eta) u-
               (1+\epsilon\eta_\epsilon) u_\epsilon)}{(1+\epsilon\eta)}\cdot\nabla u_\epsilon
               -\frac{(\eta^d\nabla\phi_\epsilon+\phi^d\nabla\eta_\epsilon)}{(1+\epsilon\eta)} -\frac{1}{(1+\epsilon\eta)}f_2,\\   %%% 第二个方程
\partial_t\phi^d+(u\cdot\nabla)\phi^d+\frac{\gamma-1}{\epsilon}(1+\epsilon\phi)\text{div}u^d-\frac{\kappa^\epsilon}{(1+\epsilon\eta)}\triangle\phi^d\\
   \qquad \qquad =-\frac{\epsilon\eta^d\partial_t\phi_\epsilon}{(1+\epsilon\eta)}-\frac{((1+\epsilon\eta) u-(1+\epsilon\eta_\epsilon) u_\epsilon)}
       {(1+\epsilon\eta)}\cdot\nabla \phi_\epsilon+\frac{\epsilon[(S_1+S_2I_3)\nabla u]}{(1+\epsilon\eta)}-\frac{1}{(1+\epsilon\eta)}f_3,\\
\tau_1^\epsilon\partial_tS_1^d+ S_1^d-\mu^\epsilon\left(\nabla u^d+(\nabla u^d)^\top -\frac{2}{3}\text{div}u^dI_3\right)=-f_4,\\  %%% 第四个方程
\tau_2^\epsilon\partial_tS_2^d + S_2^d-\lambda^\epsilon \text{div}u^d=-f_5.\\ %%% 第五个方程
  \end{cases}
\end{equation}
Define
$$
E=E(t):=\left|\left|(\rho^d,u^d,\phi^d,\sqrt{\tau_1^\epsilon}S_1^d,\sqrt{\tau_2^\epsilon}S_2^d)\right|\right|_s.
$$
Note that
$$
||(\rho_\epsilon,u_\epsilon,\phi_\epsilon,S_{1\epsilon},S_{2\epsilon})||_s \leq C
$$
and
$$
||(\rho,u,\phi)||_s \leq C(1+E), \quad  ||S_1||_s\leq C\left(1+\frac{1}{\sqrt{\tau_1^\epsilon}}E\right),
 \quad  ||S_2||_s\leq C\left(1+\frac{1}{\sqrt{\tau_2^\epsilon}}E\right).
$$

Our aim is to show that, for small $\epsilon$ and for all $t\in(0,\min\{T_\epsilon, T_*\})$, $E(t)\leq C\epsilon$, provided that $E(0)\leq C\epsilon_0$ is suitably small. Now, we list the Moser-type inequalities (see \cite{ Racke}) and the nonlinear Gronwall-type inequality (see \cite{Wen-An Yong.1999}) which will be used in the following lemma \ref{le3.2}.
\begin{lemma}[Moser-type inequalities]\label{le3.1}
 Let $k\in\mathbb{N}$. Then there is a constant $c=c(k,n)>0$ such that for all f,g $\in W^{k,2}\cap L^\infty$,and $\alpha\in \mathbb{N}_0^n,|\alpha|\leq k$, the following inequalities hold:
 \begin{align*}
  ||\nabla^\alpha(fg)||\leq c(||f||_{\infty} \cdot||\nabla^kg||+||g||_{\infty}\cdot||\nabla^kf||), \\
  ||\nabla^\alpha(fg)-f\nabla^\alpha g||\leq c(||\nabla f||_{\infty} \cdot||\nabla^{k-1}g||+||g||_{\infty}\cdot||\nabla^kf||).
 \end{align*}
\end{lemma}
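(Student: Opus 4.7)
The plan is to reduce both inequalities to the classical Gagliardo--Nirenberg interpolation inequality
$$\|\nabla^j u\|_{L^{2k/j}}\le C(j,k,n)\,\|u\|_\infty^{1-j/k}\,\|\nabla^k u\|^{j/k},\qquad 0\le j\le k,$$
combined with Leibniz's rule, H\"older's inequality and Young's inequality. This is the standard route (essentially the argument recorded in Racke's monograph cited in the lemma), and the proposal only records which tool is applied where.

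For the first inequality, I expand by Leibniz,
$$\nabla^\alpha(fg)=\sum_{\beta\le\alpha}\binom{\alpha}{\beta}\nabla^\beta f\cdot\nabla^{\alpha-\beta}g,$$
and bound each summand in $L^2$. It suffices to treat $|\alpha|=k$; the case $|\alpha|<k$ follows by the same argument. The endpoint summands $|\beta|\in\{0,k\}$ are controlled by a plain $L^\infty\times L^2$ H\"older inequality. In the interior range $1\le|\beta|=j\le k-1$, H\"older with conjugate exponents $2k/j$ and $2k/(k-j)$, together with Gagliardo--Nirenberg applied to each factor, gives
$$\|\nabla^\beta f\cdot\nabla^{\alpha-\beta}g\|\le C\|f\|_\infty^{1-j/k}\|\nabla^k f\|^{j/k}\|g\|_\infty^{j/k}\|\nabla^k g\|^{1-j/k}.$$
Young's inequality with exponents $k/j$ and $k/(k-j)$ then converts this product into $C(\|f\|_\infty\|\nabla^k g\|+\|g\|_\infty\|\nabla^k f\|)$; summing in $\beta$ finishes the first estimate.

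For the second (commutator) inequality, the $\beta=0$ contribution in the Leibniz expansion is exactly $f\nabla^\alpha g$, so
$$\nabla^\alpha(fg)-f\nabla^\alpha g=\sum_{1\le|\beta|\le|\alpha|}\binom{\alpha}{\beta}\nabla^\beta f\cdot\nabla^{\alpha-\beta}g.$$
For each surviving term I pull one derivative off $f$: choose an index $i$ with $\beta_i\ge1$, set $F:=\partial_i f$, and write $\nabla^\beta f=\nabla^{\beta-e_i}F$. The combined derivative order on $(F,g)$ is then $|\beta-e_i|+|\alpha-\beta|=|\alpha|-1\le k-1$, so I bound each summand by repeating the H\"older/Gagliardo--Nirenberg/Young procedure above, now with $k$ replaced by $k-1$ and applied to the pair $(F,g)$. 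The outcome is
$$\|\nabla^\alpha(fg)-f\nabla^\alpha g\|\le C\bigl(\|F\|_\infty\|\nabla^{k-1}g\|+\|g\|_\infty\|\nabla^{k-1}F\|\bigr),$$
and the stated bound follows from $\|F\|_\infty\le\|\nabla f\|_\infty$ and $\|\nabla^{k-1}F\|\le\|\nabla^k f\|$.

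The only real obstacle is bookkeeping: one must dispose of the degenerate H\"older/Young endpoints (exponents hitting $1$ or $\infty$), treat $k=1$ separately so that the commutator reduction collapses trivially, and invoke a routine density argument to pass from smooth compactly supported functions to general $f,g\in W^{k,2}\cap L^\infty$. Nothing here is novel, which is consistent with the authors' choice to quote the result rather than reprove it.
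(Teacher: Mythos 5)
The paper never proves this lemma: it is quoted verbatim from Racke's monograph \cite{Racke}, so there is no in-paper argument to compare against. Your route — Leibniz expansion, H\"older with exponents $2k/j$ and $2k/(k-j)$, the $L^\infty$--$\dot W^{k,2}$ Gagliardo--Nirenberg interpolation on each factor, and Young's inequality with exponents $k/j$ and $k/(k-j)$, plus pulling one derivative off $f$ for the commutator — is exactly the standard proof recorded in that reference, and the top-order computation is correct as you present it.

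The one step I would not let pass is the sentence ``the case $|\alpha|<k$ follows by the same argument.'' Running your argument for $|\alpha|=m<k$ produces $\|f\|_\infty\|\nabla^m g\|+\|g\|_\infty\|\nabla^m f\|$ on the right, and on $\mathbb{R}^n$ the intermediate seminorm $\|\nabla^m g\|$ is \emph{not} controlled by $\|\nabla^k g\|$ alone. Indeed the inequality as literally stated fails for $|\alpha|=0$: take $f=g=\phi(\cdot/\lambda)$ for a fixed bump $\phi$ and let $\lambda\to\infty$; the left side grows like $\lambda^{n/2}$ while the right side decays like $\lambda^{n/2-k}$. The same objection applies to the commutator estimate for $1\le|\alpha|<k$. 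So either $\|\nabla^k\cdot\|$ must be read as the full $W^{k,2}$ norm (which is how the paper actually uses the lemma, always through the $\|\cdot\|_s$ norms), or the statement should be restricted to $|\alpha|=k$. Your proof is complete for the top-order case; for $|\alpha|<k$ you should either add the lower-order Sobolev norms to the right-hand side or say explicitly that only the top-order case is claimed, rather than asserting that the identical interpolation goes through.
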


\begin{lemma}[Nonlinear Gronwall-type inequality]\label{le3.3}
Suppose $\sigma(t)$ is a positive $C^1$-function of $t\in[0,T)$ with $T\leq\infty$, $m>1$ and $b_1(t),b_2(t)$ are integrable on $[0,T)$. If
$$
\sigma'(t)\leq b_2\sigma^m(t)+b_1\sigma(t),
$$
then there exists a $\delta>0$, depending only on m, $C_{1b}$ and $C_{2b}$, such that
$$
\sup_{t\in[0,T)}\sigma(t)\leq e^{C_{1b}},
$$
whenever $\sigma(0)\in(0,\delta]$. Here
$$
C_{1b}=\sup_{t\in[0,T)}\int_0^tb_1(s)ds  \quad
 and \quad
 C_{2b}=\sup_{t\in[0,T)}\int_0^tmax\{b_2(s),0\}ds.
$$
\end{lemma}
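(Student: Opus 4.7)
The plan is to reduce this Bernoulli-type differential inequality to an elementary scalar inequality by first eliminating the linear $b_1\sigma$ term with an integrating factor, and then explicitly integrating the resulting inequality in the variable $\tau^{1-m}$ (where $1-m<0$ makes this transformation turn growth control into a lower bound).

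First, I would set $B_1(t):=\int_0^t b_1(s)\,ds$, so that $B_1(t)\leq C_{1b}$, and perform the substitution $\tau(t):=\sigma(t)\,e^{-B_1(t)}$. A direct computation using the hypothesis on $\sigma'$ yields
\[
\tau'(t)\;\leq\; b_2(t)\,e^{(m-1)B_1(t)}\,\tau^m(t),
\]
which holds almost everywhere since $B_1$ is only absolutely continuous. Replacing $b_2$ by $\max\{b_2,0\}$ only weakens the inequality, so I may assume $b_2\geq 0$, in which case $\int_0^t b_2(s)\,ds\leq C_{2b}$.

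Since $\sigma$, and hence $\tau$, is strictly positive, I would divide by $\tau^m$ and rewrite the inequality as
\[
\frac{d}{dt}\bigl(\tau^{1-m}(t)\bigr)\;\geq\;-(m-1)\,b_2(t)\,e^{(m-1)B_1(t)}.
\]
Integrating from $0$ to $t$ and using the uniform bound $e^{(m-1)B_1(s)}\leq e^{(m-1)C_{1b}}$ gives the pointwise lower estimate
\[
\tau^{1-m}(t)\;\geq\;\sigma(0)^{1-m}-(m-1)\,e^{(m-1)C_{1b}}\,C_{2b}.
\]

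To close the argument I would then set
\[
\delta:=\bigl[\,1+(m-1)\,e^{(m-1)C_{1b}}\,C_{2b}\,\bigr]^{-1/(m-1)},
\]
which manifestly depends only on $m$, $C_{1b}$, and $C_{2b}$. If $\sigma(0)\in(0,\delta]$, then since $1-m<0$ reverses inequalities under the $(1-m)$-th power, $\sigma(0)^{1-m}\geq 1+(m-1)\,e^{(m-1)C_{1b}}\,C_{2b}$, so the previous display forces $\tau^{1-m}(t)\geq 1$, i.e.\ $\tau(t)\leq 1$ for every $t\in[0,T)$. Consequently $\sigma(t)=\tau(t)\,e^{B_1(t)}\leq e^{C_{1b}}$, which is the desired claim. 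The main delicate point is that the bound has to hold for all $t\in[0,T)$ simultaneously, which is what forces the smallness threshold $\delta$ to absorb the worst-case factor $e^{(m-1)C_{1b}}$ globally; handling the merely integrable coefficients $b_1,b_2$ is routine once all differential inequalities are interpreted almost everywhere via absolute continuity, so I do not expect a serious technical obstacle beyond bookkeeping.
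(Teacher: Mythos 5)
The paper does not prove this lemma at all---it is quoted verbatim from the cited reference \cite{Wen-An Yong.1999}---so there is no in-paper argument to compare against. Your proof is correct and is the standard Bernoulli-substitution argument (integrating factor $e^{-B_1}$ followed by integration of $\tau^{1-m}$), which is essentially the route taken in Yong's original proof; the choice of $\delta$ and the handling of the merely integrable coefficients via absolute continuity are both sound.
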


The following lemma is essential to prove the Theorem \ref{th1.1}.
\begin{lemma}\label{le3.2}
 For suitably small $\epsilon\in(0,1)$, we have
 \begin{equation*}
   \frac{dE^2}{dt}\leq C(1+E^2)E^2+C\epsilon^2.
 \end{equation*}
\end{lemma}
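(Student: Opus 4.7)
My plan is to perform a weighted $H^s$ energy estimate on the error system \eqref{3.1} that exploits its symmetric hyperbolic-parabolic structure, in the same spirit as the argument that gave the local existence in Section 2. For each multi-index $\alpha$ with $|\alpha|\leq s$, I apply $\nabla^\alpha$ to the five equations in \eqref{3.1}, test against a carefully chosen symmetrizer, and integrate over $\mathbb{R}^3$. Summing over $|\alpha|\leq s$, the goal is to reach $\frac{1}{2}\frac{d}{dt}(\text{energy})+(\text{dissipation})\leq C(1+E^2)E^2+C\epsilon^2$, from which the claim follows once one checks that the weighted energy is equivalent to $E^2$.

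The first essential step is the cancellation of the singular $\frac{1}{\epsilon}$ terms. Guided by the symmetrizer $A_0$ of Section 2, I would multiply the $\nabla^\alpha$-differentiated continuity equation by $\frac{1+\epsilon\phi}{1+\epsilon\eta}\nabla^\alpha\eta^d$, the momentum equation by $(1+\epsilon\eta)\nabla^\alpha u^d$, and the temperature equation by $\frac{1+\epsilon\eta}{(\gamma-1)(1+\epsilon\phi)}\nabla^\alpha\phi^d$. With this choice, integration by parts converts the $\frac{1}{\epsilon}(1+\epsilon\phi)\nabla^\alpha u^d\cdot\nabla\nabla^\alpha\eta^d$ contribution from the momentum equation into $-\frac{1}{\epsilon}\int \nabla^\alpha\eta^d\,\text{div}\bigl((1+\epsilon\phi)\nabla^\alpha u^d\bigr)$, which cancels the corresponding $\frac{1}{\epsilon}(1+\epsilon\phi)\nabla^\alpha\eta^d\,\text{div}\,\nabla^\alpha u^d$ term produced by the continuity identity, modulo an $O(1)$ commutator; the singular $\phi^d$ terms cancel analogously against the $\frac{\gamma-1}{\epsilon}(1+\epsilon\phi)\,\text{div}\,u^d$ term. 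The parabolic term $-\frac{\kappa^\epsilon}{1+\epsilon\eta}\Delta\phi^d$ then yields, after integration by parts, a dissipation of the form $\kappa^\epsilon\int\frac{|\nabla\nabla^\alpha\phi^d|^2}{(\gamma-1)(1+\epsilon\phi)}$ together with commutators controlled by Lemma~\ref{le3.1}.

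The relaxation equations are handled by testing the $\nabla^\alpha$-differentiated fourth equation against $\frac{1}{2\mu^\epsilon}\nabla^\alpha S_1^d$ and the fifth against $\frac{1}{\lambda^\epsilon}\nabla^\alpha S_2^d$. Since $S_1^d$ remains symmetric and traceless (the approximation $S_{1\epsilon}=\mu^\epsilon(\nabla w+(\nabla w)^\top)$ inherits these properties from $\text{div}\,w=0$), one has $S_1^d:\bigl(\nabla u^d+(\nabla u^d)^\top-\frac{2}{3}\text{div}\,u^d\,I_3\bigr)=2\,S_1^d:\nabla u^d$, so the resulting identity reads $\frac{\tau_1^\epsilon}{4\mu^\epsilon}\frac{d}{dt}\|\nabla^\alpha S_1^d\|^2+\frac{1}{2\mu^\epsilon}\|\nabla^\alpha S_1^d\|^2=\int\nabla^\alpha S_1^d:\nabla\nabla^\alpha u^d+(\text{lower order})$, with an analogous identity for $S_2^d$. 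The coupling terms $\int\nabla\nabla^\alpha u^d:\nabla^\alpha S_1^d$ and $\int\text{div}\,\nabla^\alpha u^d\cdot\nabla^\alpha S_2^d$ generated by $-u^d\cdot(\text{div}\,S_1^d+\nabla S_2^d)$ in the momentum identity cancel these terms precisely (up to commutators from the $(1+\epsilon\eta)^{-1}$ factor). This cancellation is precisely the reason $E$ weights $S_1^d$ and $S_2^d$ by $\sqrt{\tau_i^\epsilon}$.

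What survives is bounded by $C(1+E^2)E^2+C\epsilon^2$ in three groups. The $f_i$ forcings are handled by Young's inequality: for $i=1,2,3$ this is immediate from $\|f_i\|_s\leq C\epsilon$ in \eqref{f}, while for $i=4,5$ one absorbs the otherwise-large factor $1/\sqrt{\tau_i^\epsilon}$ by using the dissipations $\frac{1}{\mu^\epsilon}\|S_1^d\|_s^2$ and $\frac{1}{\lambda^\epsilon}\|S_2^d\|_s^2$, again producing only a $C\epsilon^2$ contribution. The commutators $[\nabla^\alpha,u\cdot\nabla]\eta^d$ and their analogs, together with $\partial_t A_0$ and the semilinear right-hand sides of \eqref{3.1} that pair an error with the smooth profile $U_\epsilon$, are estimated by Lemma~\ref{le3.1} in terms of $\|u\|_s,\|\eta\|_s\leq C(1+E)$, yielding $CE^2(1+E)$. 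The main obstacle is the quadratic cross term $\epsilon(S_1^d+S_2^d I_3)\nabla u^d$ on the right of the temperature equation in \eqref{3.1}: after pairing with the $\phi^d$-multiplier and applying Moser, it contributes a term of order $\frac{\epsilon}{\sqrt{\tau_i^\epsilon}}E^3$, which because $\tau_i^\epsilon\sim\epsilon$ is $O(\sqrt{\epsilon}E^3)\leq C(E^2+E^4)=CE^2(1+E^2)$. This is exactly the mechanism that forces the $(1+E^2)$ factor in the final inequality, and assembling all contributions produces the claimed differential inequality.
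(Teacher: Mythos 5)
Your overall strategy coincides with the paper's: the same symmetrizer weights $\frac{1+\epsilon\phi}{1+\epsilon\eta}$, $(1+\epsilon\eta)$, $\frac{1+\epsilon\eta}{(\gamma-1)(1+\epsilon\phi)}$, $\frac{1}{2\mu^\epsilon}$, $\frac{1}{\lambda^\epsilon}$; the same cancellation of the $\frac{1}{\epsilon}$-singular terms up to Moser commutators; the same cancellation between the stress terms in the momentum equation and the velocity-gradient sources in the two relaxation equations (using that $S_1^d$ stays symmetric and traceless); and the same treatment of the forcings $f_i$ via \eqref{f} and of the time derivative of the symmetrizer. All of that matches the paper's computation of the terms $T_i$, $F_i$, $G_i$.

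The one step that would fail as you describe it is the quadratic source in the temperature equation (the paper's term $N_7$, coming from $\frac{\epsilon(S_1+S_2I_3)\nabla u}{1+\epsilon\eta}$; note it involves the full $S_1,S_2,u$, not only the differences). You propose to pair it with the $\phi^d$-multiplier and "apply Moser" directly. At the top order $|\alpha|=s$, Lemma \ref{le3.1} applied to $\nabla^\alpha\bigl[(S_1+S_2I_3)\nabla u\bigr]$ produces the factor $\|\nabla^{s}(\nabla u)\|=\|\nabla^{s+1}u\|$, and $\|\nabla^{s+1}u^d\|$ is not controlled by $E$: the energy contains only $s$ derivatives of $u^d$, and the extra regularity assumed on $w$ does not compensate for this. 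The paper circumvents the loss of a derivative by integrating by parts once: for $1\le|\alpha|\le s$ one derivative is moved onto $\phi^d$, producing $\nabla^{\alpha+1}\phi^d$, which is absorbed (via a small $\delta$) by the parabolic dissipation $\frac{\kappa^\epsilon}{(\gamma-1)(1+\epsilon\phi)}|\nabla^{\alpha+1}\phi^d|^2$ already present on the left-hand side, while only $\nabla^{\alpha-1}$ falls on the product, so Moser then requires at most $\nabla^{s}u$ and $\nabla^{s-1}(S_1,S_2,\eta)$; the case $|\alpha|=0$ is treated separately without derivatives. This is the same mechanism you invoke for the Laplacian commutator, but it is equally essential here, and without it the inequality cannot be closed in $H^s$. (A minor further point: the factors of $\tau_i^\epsilon$ in this term come from replacing $\|S_i\|_{s-1}$ by $\tfrac{1}{\sqrt{\tau_i^\epsilon}}$ times the weighted energy, so the smallness of $\tau_i^\epsilon$ relative to $\epsilon$ must be tracked as in the paper's bound $CE^2(1+E^2)\bigl(1+\tfrac{\epsilon^2}{\sqrt{\tau_1^\epsilon}}+\tfrac{\epsilon^2}{\sqrt{\tau_2^\epsilon}}\bigr)$, rather than asserted qualitatively.)
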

\begin{proof}
Applying $\nabla^\alpha(|\alpha|\leq s)$ to the error system \eqref{3.1}, multiplying the result by
 $
 \frac{1+\epsilon\phi}{1+\epsilon\eta}\nabla^\alpha \eta^d,\\
   (1+\epsilon\eta)\nabla^\alpha u^d,  \frac{1+\epsilon\eta}{(\gamma-1)(1+\epsilon\phi)}\nabla^\alpha \phi^d,
 \frac{1}{2\mu^\epsilon}\nabla^\alpha S_1^d,\frac{1}{\lambda^\epsilon}\nabla^\alpha S_2^d
 $
and integrating with respect to $x$, we get
\begin{equation}\label{3.2}
   \begin{aligned}
      \frac{1}{2}\frac{d}{dt}\int &\left\{\frac{1+\epsilon\phi}{1+\epsilon\eta}(\nabla^\alpha\eta^d)^2+ (1+\epsilon\eta)(\nabla^\alpha u^d)^2+ \frac{1+\epsilon\eta}{(\gamma-1)(1+\epsilon\phi)}(\nabla^\alpha \phi^d)^2\right.\\
       &\left.+ \frac{\tau_1^\epsilon}{2\mu^\epsilon}(\nabla^\alpha S_1^d)^2
              + \frac{\tau_2^\epsilon}{\lambda^\epsilon}(\nabla^\alpha S_2^d)^2\right\}dx\\
        +  \int &\left\{\frac{1}{2\mu^\epsilon}(\nabla^\alpha S_1^d)^2
                      + \frac{\kappa^\epsilon}{(\gamma-1)(1+\epsilon\phi)}(\nabla^{\alpha+1} \phi^d)^2
                      + \frac{1}{\lambda^\epsilon}(\nabla^\alpha S_2^d)^2\right\}dx\\
      \leq&\sum_{i=1}^3 | T_i |+\sum_{i=1}^5 | F_i |+\sum_{i=1}^6 | G_i |+\sum_{i=1}^7 | N_i |+|D|,\\
   \end{aligned}
\end{equation}
 where
\begin{align*}
  & T_1=\int\frac{1}{2}\left(\frac{1+\epsilon\phi}{1+\epsilon\eta}\right)_t(\nabla^\alpha\eta^d)^2dx,
    T_2=\int\frac{1}{2}(1+\epsilon\eta)_t(\nabla^\alpha u^d)^2dx,\\
  & T_3=\int\frac{1}{2}\left(\frac{1+\epsilon\eta}{(\gamma-1)(1+\epsilon\phi)}\right)_t(\nabla^\alpha \phi^d)^2dx,
    F_1=\int\frac{1+\epsilon\phi}{1+\epsilon\eta}\nabla^\alpha\eta^d \nabla^\alpha f_1 dx,\\
  & F_2=\int(1+\epsilon\eta)\nabla^\alpha u^d\nabla^\alpha\left(\frac{f_2}{1+\epsilon\eta}\right)dx,
    F_3=\int\frac{1+\epsilon\eta}{(\gamma-1)(1+\epsilon\phi)}\nabla^\alpha \phi^d\nabla^\alpha\left(\frac{f_3}{1+\epsilon\eta}\right)dx,\\
  & F_4=\int\frac{1}{2\mu^\epsilon}\nabla^\alpha S_1^d\nabla^\alpha f_4dx,
    F_5=\int\frac{1}{\lambda^\epsilon}\nabla^\alpha S_2^d\nabla^\alpha f_5dx,\\
  & G_1=\int\frac{1+\epsilon\phi}{1+\epsilon\eta}\nabla^\alpha\eta^d\nabla^\alpha(u\cdot\nabla\eta^d)dx,
    G_2=\int(1+\epsilon\eta)\nabla^\alpha u^d\nabla^\alpha\left(u\cdot\nabla u^d\right)dx,\\
  & G_3=\int\frac{1+\epsilon\eta}{(\gamma-1)(1+\epsilon\phi)}\nabla^\alpha\phi^d\nabla^\alpha\left(u\cdot\nabla \phi^d\right)dx,\\
  & G_4=\frac{1}{\epsilon}\int\frac{1+\epsilon\phi}{1+\epsilon\eta}\nabla^\alpha\eta^d\nabla^\alpha
             \left[(1+\epsilon\eta)\text{div}u^d\right]+(1+\epsilon\eta)\nabla^\alpha u^d\nabla^\alpha
             \left[\frac{(1+\epsilon\phi)}{(1+\epsilon\eta)}\nabla\eta^d\right]dx,\\
  & G_5=\frac{1}{\epsilon}\int(1+\epsilon\eta)\nabla^\alpha u^d\nabla^{\alpha+1}\phi^d
            +\frac{1+\epsilon\eta}{1+\epsilon\phi}\nabla^\alpha\phi^d\nabla^\alpha[(1+\epsilon\phi)\text{div}u^d] dx,\\
  & G_6=\int(1+\epsilon\eta)\nabla^\alpha u^d\nabla^\alpha \left(\frac{1}{(1+\epsilon\eta)}(\text{div}S_1^d+\nabla S_2^d)\right)\\
  &  \qquad  \quad  + \frac{1}{2}\nabla^\alpha S_1^d\nabla^\alpha
             \left(\nabla u^d+(\nabla u^d)^\top -\frac{2}{3}\text{div}u^dI_3\right) + \nabla^\alpha S_2^d\nabla^\alpha(\text{div}u^d)dx,\\
  & N_1=\int\frac{1+\epsilon\phi}{1+\epsilon\eta}\nabla^\alpha\eta^d\nabla^\alpha(u^d\nabla\eta_\epsilon)dx,
    N_2=\int(1+\epsilon\eta)\nabla^\alpha u^d\nabla^\alpha\left(\frac{\epsilon\eta^d\partial_tu_\epsilon}{1+\epsilon\eta}\right)dx,\\
  & N_3=\int(1+\epsilon\eta)\nabla^\alpha u^d\nabla^\alpha\left[\frac{(1+\epsilon\eta)u
             -(1+\epsilon\eta_\epsilon) u_\epsilon}{(1+\epsilon\eta)}\cdot\nabla u_\epsilon\right]dx,\\
  & N_4=\int(1+\epsilon\eta)\nabla^\alpha u^d\nabla^\alpha
             \left(\frac{\eta^d\nabla\phi_\epsilon+\phi^d\nabla\eta_\epsilon}{1+\epsilon\eta}\right)dx,
    N_5=\int\frac{(1+\epsilon\eta)\nabla^\alpha\phi^d}{(\gamma-1)(1+\epsilon\phi)}\nabla^\alpha
             \left(\frac{\epsilon\eta^d\partial_t\phi_\epsilon}{1+\epsilon\eta}\right)dx,\\
  & N_6=\int\frac{1+\epsilon\eta}{(\gamma-1)(1+\epsilon\phi)}\nabla^\alpha\phi^d\nabla^\alpha\left[\frac{(1+\epsilon\eta) u-
             (1+\epsilon\eta_\epsilon) u_\epsilon}{(1+\epsilon\eta)}\cdot\nabla \phi_\epsilon\right]dx,\\
  & N_7=\int\frac{1+\epsilon\eta}{(\gamma-1)(1+\epsilon\phi)}\nabla^\alpha\phi^d\nabla^\alpha
             \left[\frac{\epsilon(S_1+S_2I_3)\nabla u}{(1+\epsilon\eta)}\right]dx.\\
  &D=\int\frac{1+\epsilon\eta}{(\gamma-1)(1+\epsilon\phi)}\nabla^\alpha\phi^d\left[\nabla^\alpha\left(\frac{\kappa^\epsilon}{(1+\epsilon\eta)}
              \triangle \phi^d\right)-\frac{\kappa^\epsilon}{(1+\epsilon\eta)}\nabla^\alpha\triangle \phi^d\right]\\
  &  \qquad  \quad + \nabla\left[\frac{\kappa^\epsilon}{(\gamma-1)(1+\epsilon\phi)}\right]\nabla^\alpha\phi^d\nabla^{\alpha+1}\phi^d dx.
\end{align*}
Note that in the time interval $[0, min({T_*, T_\epsilon})]$, both $(\eta,u,\phi,S_1,S_2)(t)$ and $(\eta_\epsilon,u_\epsilon,\phi_\epsilon,S_{1\epsilon},S_{2\epsilon})(t)$ are regular enough and take values in a convex compact subset of G. It's easy to know
\begin{align*}
& |(F_1,F_2,F_3)|\leq C\epsilon^2+C||\nabla^\alpha(\eta^d,u^d,\phi^d)||^2\leq C\epsilon^2+CE^2,\\
& |F_4|\leq C\epsilon^2+\delta_1||\nabla^\alpha S_1^d||^2, \quad
  |F_5|\leq C\epsilon^2+\delta_2||\nabla^\alpha S_2^d||^2.
\end{align*}
We now estimate $G_1\sim G_6$.
\begin{align*}
|G_1|&\leq\left|\int\frac{1+\epsilon\phi}{1+\epsilon\eta}\nabla^\alpha\eta^d\left[\nabla^\alpha(u\nabla\eta^d)-u\nabla^{\alpha+1}\eta^d\right]
            +\frac{1+\epsilon\phi}{1+\epsilon\eta}\nabla^\alpha\eta^du\nabla^{\alpha+1}\eta^d dx\right|  \\
     &\leq C(||u||_{\infty}||\nabla^\alpha\eta^d||+||\nabla\eta^d||_{\infty}||\nabla^\alpha u||)||\nabla^\alpha\eta^d||
            +\left|\left|\text{div}\left(\frac{(1+\epsilon\phi)u}{1+\epsilon\eta}\right)\right|\right|_{\infty}||\nabla^\alpha\eta^d||^2  \\
     &\leq C(1+E)E^2.
\end{align*}
$G_2\sim G_3$ can be estimated in the similar way. As for the singular terms $G_4\sim G_6$, we have
\begin{align*}
G_4=\frac{1}{\epsilon}\int & (1+\epsilon\phi)\text{div}(\nabla^\alpha u^d\nabla^\alpha\eta^d)\\
   & +\frac{1+\epsilon\phi}{1+\epsilon\eta}\nabla^\alpha\eta^d\{\nabla^\alpha[(1+\epsilon\eta)\text{div}u^d]-(1+\epsilon\eta)
        \nabla^\alpha \text{div}u^d\}\\
   & +(1+\epsilon\eta)\nabla^\alpha u^d\left[\nabla^\alpha
      \left(\frac{1+\epsilon\phi}{1+\epsilon\eta}\nabla\eta^d\right)-\frac{1+\epsilon\phi}{1+\epsilon\eta}\nabla^\alpha(\nabla\eta^d)\right]dx\\
 \leq C||( & \nabla^\alpha u^d,\nabla^\alpha\eta^d)|| \Big{(} ||\nabla\eta||_\infty
            ||\nabla^\alpha u^d||+||\text{div}u^d||_\infty||\nabla^\alpha\eta|| \\
   & +||\nabla\eta||_\infty||\nabla\phi||_\infty||\nabla^\alpha \eta^d||+||\nabla\eta^d||_\infty||\nabla^\alpha\phi||||\nabla^\alpha\eta||\Big{)}\\
 \leq C E&^2(1+E^2)
\end{align*}
and
\begin{align*}
G_5=\frac{1}{\epsilon}\int & (1+\epsilon\eta)\text{div}(\nabla^\alpha u^d\nabla^\alpha\phi^d)\\
   & +\frac{1+\epsilon\eta}{1+\epsilon\phi}\nabla^\alpha u^d\{\nabla^\alpha[(1+\epsilon\phi)\text{div}u^d]-(1+\epsilon\phi)
        \nabla^\alpha \text{div}u^d\}dx\\
 \leq C||( & \nabla^\alpha u^d,\nabla^\alpha\phi^d)|| \Big{(}||\nabla\eta||_\infty+ ||\nabla\phi||_\infty
            ||\nabla^\alpha u^d||+||\text{div}u^d||_\infty||\nabla^\alpha\phi|| \Big{)}\\
 \leq C E&^2(1+E).
\end{align*}
$G_6$ can be estimated similarly and the result is
$$
  G_6\leq\delta_3||(S_1^d,S_2^d)||_s^2+C\epsilon^2(1+E^2)E^2.
$$
Note that when estimating $G_6$, the following two essential cancelation relations are used
$$
\int\nabla^\alpha(\text{div}S_1^d)\nabla^\alpha u^ddx=-\int\frac{1}{2}\nabla^\alpha
\left(\nabla u^d+(\nabla u^d)^\top -\frac{2}{3}\text{div}u^dI_3\right)\nabla^\alpha S_1^d dx
$$
and
$$
\int\nabla^\alpha(\nabla S_2^d)\nabla^\alpha u^ddx=-\int\nabla^\alpha S_2^d\nabla^\alpha(\text{div}u^d )dx.
$$
 These two relations can be easily shown to hold by doing partial integration and using the fact that $S^d_1$ is a symmetric and traceless matrix.
As for $N_i (i=1,2,3,4,5,6) $, noting that $N_4\sim N_6$ are similar to $N_1\sim N_3$, it is enough to estimate $N_1\sim N_3$.
\begin{align*}
N_1&=\int\frac{1+\epsilon\phi}{1+\epsilon\eta}\nabla^\alpha\eta^d\nabla^\alpha(u^d\nabla\eta_\epsilon) dx  \\
   &\leq \left|\left|\frac{1+\epsilon\phi}{1+\epsilon\eta}\right|\right|_{\infty}||\nabla^\alpha\eta^d||\left(|| \nabla\eta_\epsilon||_{\infty}||\nabla^\alpha u^d||+||u^d||_{\infty}||\nabla^{\alpha+1}\eta_\epsilon||\right)\\
   &\leq C\epsilon E^2,
\end{align*}
 similarly,
\begin{align*}
N_2=\int(1+\epsilon\eta)\nabla^\alpha u^d\nabla^\alpha\left(\frac{\epsilon\eta^d\partial_tu_\epsilon}{1+\epsilon\eta}\right)dx\leq C\epsilon E^2
\end{align*}
and
\begin{align*}
N_3&=\int(1+\epsilon\eta)\nabla^\alpha u^d\nabla^\alpha\left[\frac{(1+\epsilon\eta)u
          -(1+\epsilon\eta_\epsilon) u_\epsilon}{1+\epsilon\eta}\cdot\nabla u_\epsilon\right]dx \\
   &=\int(1+\epsilon\eta)\nabla^\alpha u^d\nabla^\alpha\left[\frac{u^d+\epsilon(\eta^du+\eta_\epsilon u^d)}{1+\epsilon\eta}
         \cdot\nabla u_\epsilon\right]dx \\
   &\leq C(E^2+(1+\epsilon)E^3).
\end{align*}
We estimate $N_7$ for $1\leq|\alpha|\leq s$ and $|\alpha|=0$.\\
Case 1: for $1\leq|\alpha|\leq s$, We have
\begin{align*}
N_7&=\int\frac{1+\epsilon\eta}{(\gamma-1)(1+\epsilon\phi)}\nabla^\alpha\phi^d\nabla^\alpha
             \left[\frac{\epsilon(S_1+S_2I_3)\nabla u}{(1+\epsilon\eta)}\right]dx \\
   &=-\int\frac{1+\epsilon\eta}{(\gamma-1)(1+\epsilon\phi)}\nabla^{\alpha+1}\phi^d\nabla^{\alpha-1}
             \left[\frac{\epsilon(S_1+S_2I_3)\nabla u}{(1+\epsilon\eta)}\right]dx \\
   &=-\int\nabla\left[\frac{1+\epsilon\eta}{(\gamma-1)(1+\epsilon\phi)}\right]\nabla^\alpha\phi^d\nabla^{\alpha-1}
             \left[\frac{\epsilon(S_1+S_2I_3)\nabla u}{(1+\epsilon\eta)}\right]dx\\
   &=:N_7^1+N_7^2
\end{align*}
with
\begin{align*}
|N_7^1| & =\left|\int\frac{1+\epsilon\eta}{(\gamma-1)(1+\epsilon\phi)}\nabla^{\alpha+1}\phi^d\nabla^{\alpha-1}
             \left[\frac{\epsilon(S_1+S_2I_3)\nabla u}{(1+\epsilon\eta)}\right]dx\right|\\
        & \leq\int\delta_4(\nabla^{\alpha+1}\phi^d)^2 dx
            + \epsilon^2(1+\epsilon\eta)^2\left\{\nabla^{\alpha-1}\left[\frac{(S_1+S_2I_3)\nabla u}{(1+\epsilon\eta)}\right]\right\}^2dx \\
        & \leq \delta_4\int(\nabla^{\alpha+1}\phi^d)^2 dx+C\epsilon^2(1+E^4)+\epsilon^2E^2\left(\frac{1}{\sqrt{\tau_1^\epsilon}}+\frac{1}{\sqrt{\tau_2^\epsilon}}\right),
\end{align*}
and
\begin{align*}
|N_7^2| & =\left|\int\nabla\left[\frac{1+\epsilon\eta}{(\gamma-1)(1+\epsilon\phi)}\right]\nabla^\alpha\phi^d\nabla^{\alpha-1}
             \left[\frac{\epsilon(S_1+S_2I_3)\nabla u}{(1+\epsilon\eta)}\right]dx\right|\\
        & \leq \epsilon^2||\nabla(\eta,\phi)||_{\infty}||\nabla^\alpha\phi^d||(||(S_1,S_2,\eta)||_{\infty}||\nabla^\alpha u||+||\nabla u||_{\infty}
            ||\nabla^{\alpha-1}(S_1,S_2,\eta)||)\\
        & \leq C\epsilon^2(1+E)^2 E\left(1+E+\frac{1}{\sqrt{\tau_1^\epsilon}}E+\frac{1}{\sqrt{\tau_2^\epsilon}}E\right)\\
        & \leq C\epsilon^2(1+E^4)+ C\epsilon^2E^2(1+E^2)\left(\frac{1}{\sqrt{\tau_1^\epsilon}}+\frac{1}{\sqrt{\tau_2^\epsilon}}\right).
\end{align*}
Case 2: for $|\alpha|=0$, It holds that
\begin{align*}
N_7 &=\int\frac{\epsilon}{(\gamma-1)(1+\epsilon\phi)}\phi^d(S_1+S_2I_3)\nabla udx \\
    & \leq \epsilon||\phi^d|| ||(S_1+S_2I_3)\nabla u||\\
    & \leq C\epsilon E(1+E)\left(1+\frac{1}{\sqrt{\tau_1^\epsilon}}E+\frac{1}{\sqrt{\tau_2^\epsilon}}E\right)\\
    & \leq CE^2(1+E^2)+C\epsilon^2+\epsilon^2E^2\left(\frac{1}{\sqrt{\tau_1^\epsilon}}+\frac{1}{\sqrt{\tau_2^\epsilon}}\right).
\end{align*}
Therefore, we obtain
 $$
     N_7\leq \delta_4\int(\nabla^{\alpha+1}\phi^d)^2 dx+ CE^2(1+E^2)\left(1+\frac{\epsilon^2}{\sqrt{\tau_1^\epsilon}}+\frac{\epsilon^2}{\sqrt{\tau_2^\epsilon}}\right)+ C\epsilon^2.
 $$
While
\begin{align*}
D & =\int \frac{1+\epsilon\eta}{(\gamma-1)(1+\epsilon\phi)}\nabla^\alpha\phi^d\left[\nabla^\alpha\left(\frac{\kappa^\epsilon}{(1+\epsilon\eta)}
              \triangle \phi^d\right)-\frac{\kappa^\epsilon}{(1+\epsilon\eta)}\nabla^\alpha\triangle \phi^d\right]\\
  &  \qquad  + \nabla\left[\frac{\kappa^\epsilon}{(\gamma-1)(1+\epsilon\phi)}\right]\nabla^\alpha\phi^d\nabla^{\alpha+1}\phi^d dx.\\
  &\leq||\nabla^\alpha\phi^d||(||\triangle\phi^d||_{\infty}||\nabla^\alpha\eta||+\epsilon||\nabla(\phi,\eta)||_{\infty}||\nabla^{\alpha+1}\phi^d||)\\
  & \leq CE^2(1+E)+ C\epsilon E(1+E)||\nabla^{\alpha+1}\phi^d||\\
  & \leq CE^2(1+E)+ C\epsilon^2 E^2(1+E)^2 + \delta_5||\nabla^{\alpha+1}\phi^d||^2.
\end{align*}
We now estimate the last three terms $ T_1\sim T_3 $.
\begin{align*}
|T_i|&\leq C\epsilon||(\eta_t,\phi_t)||_\infty||\nabla^\alpha(\eta^d,u^d,\phi^d)||^2\leq C\epsilon||(\eta_t,\phi_t)||_{\infty} E^2\\
     &\leq C\epsilon(1+||(\eta^d_t,\phi^d_t)||_{\infty}) E^2\leq C(1+E^2)E^2.
\end{align*}

Based on the above estimates, we can take $\delta_1+\delta_3,\delta_2+\delta_3$ and $\delta_4+\delta_5$ suitably small such that  $(\delta_1+\delta_3)||\nabla^\alpha S_1^d||^2,(\delta_2+\delta_3)||\nabla^\alpha S_2^d||^2$
and $(\delta_4+\delta_5)||\nabla^{\alpha+1}\phi^d||^2$
can be absorbed by the integrals of
$(\nabla^\alpha S_1^d)^2,(\nabla^\alpha S_2^d)^2$ and $(\nabla^{\alpha+1}\phi^d)^2$
on the left side.
Therefore, we have the fact
$$
 \frac{dE^2}{dt}\leq C(1+E^2)E^2+C\epsilon^2.
$$
The proof is accomplished.
\end{proof}
Now we turn to prove Theorem \ref{th1.1}. We integrate the inequality in Lemma \ref{le3.2} over (0, t) with $t\leq min\{T_\epsilon, T_*\}$ to obtain
$$
E^2\leq CE(0)^2+C\int_0^t(1+E^2)E^2dt+C\int_0^t\epsilon^2 dt.
$$
With the help of the condition in Theorem \ref{th1.1} and Gronwall's lemma, we conclude that
$$
E^2\leq C\epsilon^2 \text{exp}\left\{C\int_0^t(1+E^2)dt\right\}\equiv\Phi(t).
$$
Moreover, it is easy to know
$$
\Phi'(t)\leq C\Phi(t)(1+E^2)=C\left[\Phi(t)+\Phi^2(t)\right].
$$
By employing the nonlinear Gronwall-type inequality, we conclude that there exists a constant $K$ such that
$$
E\leq K\epsilon
$$
for all $t\in(0,\min\{T_\epsilon, T_*\})$, provided that $\Phi(0)=C\epsilon^2$ is suitably small.

\end{document}